\documentclass[reqno,12pt]{amsart}
 \usepackage{amsmath}
 \usepackage{amsthm}
 \usepackage{amssymb}
 \usepackage{latexsym,longtable}
 \usepackage{graphicx}
 \usepackage{multicol}
 \usepackage{mathrsfs}
 \usepackage{young}
 \usepackage[vcentermath,enableskew,stdtext]{youngtab}
 \usepackage[left=1.1in,right=1.1in,top=1.06in,bottom=1.1in]{geometry}
 \usepackage[all]{xy}
    \SelectTips{cm}{10}     
    \everyxy={<2.5em,0em>:} 
 \usepackage{fancyhdr}      
 \usepackage{multicol}
 \usepackage{multirow}
 \usepackage{enumitem}
 \usepackage{bm}
 \usepackage{stmaryrd}
 \usepackage{etex}

\usepackage{tabmac}
\usepackage{ytableau}
\usepackage{tikz}
\usepackage{tkz-euclide}
\usetikzlibrary{backgrounds}
\usetikzlibrary{arrows}

\usepackage[colorinlistoftodos]{todonotes}

\definecolor{amethyst}{rgb}{0.5, 0.3, 0.7}

 \usepackage{float}
 \usepackage{array}
 \usepackage{colortbl}
 \usepackage{mathtools}
 \restylefloat{figure}
\numberwithin{equation}{section}
 \usetikzlibrary{shapes}
 \usetikzlibrary{arrows}

 \usetikzlibrary{snakes}
\usepackage{url}

\usepackage{amsfonts,epsfig,color}
\makeatletter
\newcommand*{\centerfloat}{%
  \parindent \z@
  \leftskip \z@ \@plus 1fil \@minus \textwidth
  \rightskip\leftskip
  \parfillskip \z@skip}
\makeatother

\theoremstyle{plain}
\newtheorem{theorem}{Theorem}[section]
\newtheorem{lemma}[theorem]{Lemma}

\newtheorem{conjecture}[theorem]{Conjecture}

\theoremstyle{definition}
\newtheorem{definition}[theorem]{Definition}

\newtheorem{example}[theorem]{Example}

\newcommand{\ignore}[1]{}

\newcommand{\NN}{\ensuremath{\mathbb{N}}}

\newcommand{\be}{\begin{equation}}
\newcommand{\ee}{\end{equation}}

\usetikzlibrary{positioning}
\usetikzlibrary{arrows.meta}
\tikzstyle{mystealthbig} = [-{Latex[length=2.1mm, width=1.53mm]}]
\tikzstyle{mystealth} = [-{Latex[length=1.8mm, width=1.34mm]}]
\tikzstyle{mystealthsmall} = [-latex]

\tikzstyle{vertex}=[circle, fill, inner sep=0pt, minimum size=4pt, outer sep = .3pt]

\usepackage{ytableau}
\usepackage{relsize}
\usepackage{stackengine}
\usepackage{hyperref}

\newcommand{\setmid}{\,\,\vrule\,\,}

\newcommand{\mathhuger}[1]{\mathlarger{\mathlarger{\mathlarger{#1}}}}

\newcommand\restr[2]{{
  \left.\kern-\nulldelimiterspace 
  #1 
  \littletaller 
  \right|_{#2} 
  }}

\newcommand{\littletaller}{\mathchoice{\vphantom{\big|}}{}{}{}}

\author{Holden Eriksson}
\address{Department of Mathematics, University of Virginia,
Charlottesville, VA 22903, USA}
\email{mmw2va@virginia.edu}

\title{A note on a cyclotomic-friendly application of RSK}

\begin{document}

\begin{abstract}
We give a combinatorial realization of a level-$\ell$ Robinson–Schensted–Knuth correspondence conjectured to exist by Song and Wang for cyclotomic Schur categories. We show that cyclotomic basis elements can be canonically reorganized into flagged block composition matrices encoding families of biwords, so that the correspondence is obtained by applying the classical RSK correspondence componentwise. This perspective identifies the level-$\ell$ correspondence as an iteration of classical RSK, specializing to the usual correspondence when $\ell=1$ and behaving naturally under restriction to lower levels.
\end{abstract}
\maketitle

\section{Introduction}

The Robinson--Schensted--Knuth correspondence is a bijection between (bi)words or nonnegative integer matrices and pairs of Young tableaux of the same shape, with applications to symmetric functions, permutation statistics such as longest increasing subsequences, and the representation theory of the symmetric and general linear groups. With the development of web categories, representation theories traditionally governed by Young tableaux have been reformulated in diagrammatic terms that incorporate additional structure, such as multiple components, block decompositions, and restrictions on how labels may appear. In this setting, the relevant combinatorial data consist not of a single word or matrix but of several interrelated pieces, and the resulting tableau data record this multi-component structure. This viewpoint naturally raises the question of whether classical correspondences such as RSK admit higher--level analogues that organize such data in a way compatible with the additional structure.

In \cite{beeo}, the authors introduced the \emph{Schur category}, whose path algebra recovers certain subalgebras of Schur algebras, alongside a category \emph{Web}, a $\mathfrak{gl}_{\infty}$--analogue of the $\mathfrak{sl}_n$ web categories. They showed that these categories are isomorphic, and that both contain the (degenerate) Hecke category via fully faithful functors. Building on this framework, Song and Wang \cite{wang1,wang2} defined \emph{affine Schur} and \emph{affine web} categories. In contrast to the finite case, the affine web category embeds as a full subcategory of the affine Schur category rather than being equivalent to it. Their work further introduced \emph{cyclotomic} quotients of these categories, which retain embeddings of the cyclotomic Hecke category and exhibit rich combinatorial structure.

A key result of \cite{wang2} identifies the path algebra of the cyclotomic Schur category with the cyclotomic Schur algebra of Dipper--James--Mathas \cite{djm}. Song and Wang describe two natural bases for this algebra. The first is indexed by pairs of tableaux and corresponds, under this identification, to the cellular basis of the cyclotomic Schur algebra. The second is a family of basis elements indexed by enriched block matrices arising from elementary ``chicken--foot'' diagrams in the cyclotomic web category. The coexistence of these two parametrizations naturally suggests a bijection relating block-matrix data to pairs of tableaux in the cyclotomic setting.

\begin{conjecture}[Song--Wang {\cite{wang2}}]\label{conj}
There exists, for each level $\ell$, a combinatorial bijection
\begin{equation}
\label{e:psi}
\psi_\ell : \mathrm{ParMat}_{\nu,\mu}^{\flat} \longrightarrow 
\bigsqcup_{\lambda \vdash_\ell n}
\mathrm{SST}_\ell(\lambda,\mu)\times \mathrm{SST}_\ell(\lambda,\nu),
\end{equation}
which extends the level $\ell-1$ bijection and recovers the classical RSK correspondence when $\ell=1$.
\end{conjecture}

We confirm this conjecture by giving an explicit bijection at level~$\ell$ that uses only the classical RSK correspondence. The key step is to recast the combinatorial parameters in the set $\mathrm{ParMat}^{\flat}$  as \emph{flagged block composition matrices}. These matrices encode, in a transparent way, collections of biwords subject to the level-dependent labeling restrictions. With this reformulation in hand, the bijection is obtained by applying the classical RSK correspondence independently in each component and then assembling the outputs as semistandard multitableaux of common shape.

Our approach proceeds in three steps. First, we reinterpret the $\mathrm{ParMat}^{\flat}$-indexed data in terms of the {flagged block composition matrices}, which present the same information in a form amenable to combinatorial manipulation. Second, we establish a natural bijection between these matrices and collections of \emph{flagged biwords}, extending the classical correspondence between nonnegative integer matrices and biwords. Finally, we apply the classical RSK correspondence componentwise (after a simple relabeling of the alphabets) to obtain pairs of semistandard multitableaux of common shape.

The resulting correspondence is compatible with restriction to lower levels, specializes to the usual RSK correspondence when $\ell=1$, and reflects the multipartition data inherent in the cyclotomic setting. In particular, it provides a direct combinatorial explanation for the relationship between the tableau--indexed and diagrammatic bases of the cyclotomic Schur category.

\noindent {\bf Acknowledgement.} We thank Linliang Song and Weiqiang Wang for formulating the initiating conjecture. We are especially grateful to Weiqiang Wang for personally recommending this combinatorial problem.

\section{Definitions}

\subsection{Classical RSK ingredients}

Set  $\NN\coloneq\{1,2,3,\ldots\}$. We use {\it composition} to refer to a vector $\alpha=(\alpha_1,\alpha_2,\ldots,\alpha_k)$ of nonnegative integers. A composition is denoted $\alpha\vDash n$ where $n\coloneq\sum_i \alpha_i$. If a composition $\lambda\vDash n$ satisfies $\lambda_i\geq\lambda_{i+1}>0$ for all $i$, then we call it a {\it partition} and denote this fact by $\lambda\vdash n$. The \textit{Ferrers diagram} of shape $\lambda\vdash n$ is given by
\[
  F(\lambda)\coloneq\{(i,j) \setmid i,j\geq1\text{ and }j\leq\lambda_i\}.
  \]
Given a totally ordered set $\mathcal{A}$, a \textit{tableau $T$ of shape $\lambda$} is a filling of the Ferrers diagram with letters from $\mathcal{A}$ (precisely, it is map $T:F(\lambda)\rightarrow\mathcal{A}$). We say $T$ is \textit{semistandard} if it has weakly increasing rows and strictly increasing columns (precisely, if $T(i,j)<T(i+1,j)$ and $T(i,j)\leq T(i,j+1)$ hold for all $i,j$). Let $\mathrm{SST}(\mathcal{A},\lambda)$ denote the set of all semistandard tableaux of shape $\lambda$.
When \(\mathcal A=\mathbb N:=\{1,2,3,\ldots\}\), a tableau \(T\) can be attached to a composition \(\mu \vDash n\) called its \textit{content}, defined by
\[
\mu_i = \#\{\text{entries of }T\text{ equal to } i\}=|T^{-1}(i)|.
\]
We write \(\mathrm{wt}(T)=\mu\), and denote by \(\mathrm{SST}(\mathbb N,\lambda,\mu)\) the set of semistandard tableaux of shape \(\lambda\) and content \(\mu\).

The central object in the RSK correspondence is the biword. Given a totally ordered set of letters $\mathcal{A}$, a \textit{biword of length }$k$ is a $2\times k$ array

\[
w=\begin{pmatrix}
\alpha_1 & \alpha_2 & \cdots & \alpha_k \\
\beta_1 & \beta_2 & \cdots & \beta_k
\end{pmatrix},
\]
with entries in \(\mathcal A\) and
lexicographically increasing columns; that is,
\[
\alpha_1 \le \alpha_2 \le \cdots \le \alpha_k,
\]
and whenever \(\alpha_i=\alpha_{i+1}\), we also have \(\beta_i \le \beta_{i+1}\). We write
\[
w_{\mathrm{top}}=\alpha_1\alpha_2\cdots\alpha_k
\qquad\text{and}\qquad
w_{\mathrm{bot}}=\beta_1\beta_2\cdots\beta_k,
\]
and denote by \(\mathrm{BW}(\mathcal A)\) the set of such biwords.

\begin{theorem}\label{classicalRSK}[Robinson--Schensted--Knuth \cite{RSK}]
There is a bijection
\[
\mathrm{BW}(\mathbb N)
\longleftrightarrow
\coprod_{\lambda\vdash n}
\mathrm{SST}(\mathbb N,\lambda)
\times
\mathrm{SST}(\mathbb N,\lambda).
\]
For biword $w$ corresponding to $(P,Q)$, 
\(
\mathrm{wt}(P)=\text{content of } w_{\mathrm{bot}}\) 
and
\(\mathrm{wt}(Q)=\text{content of } w_{\mathrm{top}}.
\)
\end{theorem}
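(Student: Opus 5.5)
We prove Theorem~\ref{classicalRSK} in the standard way. We build the map from biwords to tableau pairs by Schensted row insertion, then show it is invertible by reverse bumping. For a semistandard tableau $P$ and a letter $x\in\mathbb N$, define the row insertion $P\leftarrow x$ as follows. Insert $x$ into the first row. If some entry is strictly greater than $x$, the leftmost such entry $y$ is replaced by $x$ and $y$ is inserted into the next row. Otherwise $x$ is appended at the end of the row. The first step is to verify that $P\leftarrow x$ is again semistandard and that its shape exceeds that of $P$ by exactly one outer corner. Rows stay weakly increasing because we replace the leftmost strictly larger entry. For columns, one shows that the bumped positions move weakly left as we go down, so strictness of columns is preserved.

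Given a biword $w$ with columns $(\alpha_i,\beta_i)$, set $P_0=Q_0=\emptyset$. Then put $P_i=P_{i-1}\leftarrow\beta_i$, and let $Q_i$ be $Q_{i-1}$ with $\alpha_i$ placed in the new box. $P=P_k$ is semistandard by the first step, and $Q=Q_k$ has the same shape by construction. The content statements are immediate: $P$ contains exactly the letters of $w_{\mathrm{bot}}$ and $Q$ those of $w_{\mathrm{top}}$.

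The key step, and the main obstacle, is showing that $Q$ is semistandard. Since $\alpha$ is weakly increasing, entries of $Q$ are added in weakly increasing order, so rows and columns are weakly increasing. What remains is that equal top letters never land in the same column. For a run $\alpha_i=\cdots=\alpha_j$ we have $\beta_i\le\cdots\le\beta_j$. We then invoke the row bumping lemma: if $x\le x'$ are inserted successively, the bumping route of $x'$ lies strictly to the right of that of $x$, and the new box of $x'$ is strictly right of and weakly above the new box of $x$. Hence the boxes created within a run form a horizontal strip, and $Q$ has strictly increasing columns. I would prove the bumping lemma row by row by induction, comparing the bumped entries at each row.

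For bijectivity, we construct the inverse. Given $(P,Q)$ of common shape, locate the largest entry of $Q$; among ties, take the rightmost occurrence, which is an outer corner because equal entries form a horizontal strip. Reverse-bump from the corresponding box of $P$: remove its entry $y$, and in the row above replace the rightmost entry strictly smaller than $y$ by $y$, continuing up to the first row, which ejects a letter $\beta$. This recovers $(\alpha,\beta)$ with $\alpha$ the removed $Q$-entry. Reverse bumping inverts row insertion step by step. The converse of the bumping lemma guarantees that the ejected $\beta$'s within a tie block of $\alpha$ come out weakly decreasing, so the recovered array is a genuine biword with lexicographically increasing columns. Since the two maps are mutually inverse on each step, this gives the bijection.
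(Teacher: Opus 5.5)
The paper gives no proof of this statement; it quotes classical RSK from \cite{RSK}, and your proposal is the standard Schensted-insertion and reverse-bumping proof behind that citation, so it is correct. The one point to make explicit is that the ``converse'' you invoke comes from the other half of the row bumping lemma (if $x>x'$, the new box of $x'$ lies strictly below and weakly left of that of $x$), which, together with the fact that the box added at step $i$ is the rightmost occurrence of the maximum of $Q_i$, ensures the inverse removes the right box and ejects the $\beta$'s in the right order.
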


Another formulation of the RSK correspondence involves the $\mathbb N$-matrix, defined as a map
\[
A:\mathbb N^2\to\mathbb N,
\]
with finite support. We write $A=(a_{ij})$, where $a_{ij}=A(i,j)$.
We say that $A$ has \textit{row sum} $\nu$ and \textit{column sum} $\mu$, writing
$\mathrm{row}(A)=\nu$ and $\mathrm{col}(A)=\mu$, if
\[
\nu_t=\sum_j a_{tj}
\qquad\text{and}\qquad
\mu_t=\sum_i a_{it}.
\]

Each biword $w\in \mathrm{BW}(\mathbb N)$ naturally determines an
$\mathbb N$-matrix $A$ by setting $a_{ij}$ equal to the multiplicity of the
biletter $\begin{pmatrix} i \\ j \end{pmatrix}$ in $w$.
Under this correspondence,
\[
\mathrm{row}(A)_i=\#\{\text{occurrences of } i \text{ in } w_{\mathrm{top}}\}
\quad\text{and}\quad
\mathrm{col}(A)_j=\#\{\text{occurrences of } j \text{ in } w_{\mathrm{bot}}\}.
\]
Thus the RSK correspondence may equivalently be written as a bijection
\begin{equation}
\label{e rskmatrix}
    \left\{\NN\mathrm{-matrices}\,\,A\setmid\begin{tabular}{c}$\mathrm{row}(A)=\nu$\\$\mathrm{col}(A)=\mu$\end{tabular}\right\}\longleftrightarrow\coprod_{\lambda\vdash n}\text{\rm SST}(\NN,\lambda,\mu)\times\text{\rm SST}(\NN,\lambda,\nu).
  \end{equation}

\subsection{Semistandard multitableaux}

We begin by defining the index set for the cellular basis of the cyclotomic Schur algebra. Its elements make up the codomain of $\psi_\ell$ in \eqref{e:psi}


\begin{definition}\label{def:l-multi}
Let $\ell,n\in\mathbb{N}$.
\begin{enumerate}
\item An \emph{$\ell$-multicomposition} of $n$ (resp.\ an \emph{$\ell$-multipartition} of $n$) is an $\ell$-tuple
\[
\mu=(\mu^{(1)},\ldots,\mu^{(\ell)}),
\]
such that each $\mu^{(i)}$ is a composition (resp.\ a partition) and
\(
\sum_{i=1}^{\ell}\lvert \mu^{(i)}\rvert = n.
\)
We write $\mu \vDash_{\ell} n$ (resp.\ $\mu \vdash_{\ell} n$).

\item Let $\mathcal S:=\{\,a_b \mid a,b\in\mathbb{N}\,\}$ be endowed with the fixed total order
\begin{equation}\label{eq:S-order}
1_1<2_1<3_1<\cdots<1_2<2_2<3_2<\cdots<1_3<2_3<\cdots .
\end{equation}
For $\mu\vDash_{\ell} n$, the \emph{$\mu$-alphabet} is the finite multiset $U_\mu\subset S$ in which the letter $a_b$ occurs with multiplicity equal to the $a$-th part of the composition $\mu^{(b)}$ (interpreted as $0$ if that part is absent). Equivalently,
\[
\#\{a_b\in U_\mu\} = \mu^{(b)}_{a}\qquad (a\ge 1,\ 1\le b\le \ell).
\]
\end{enumerate}
\end{definition}

\begin{example}
If $\mu=(23,\ 1,\ 211)$ (so $\ell=3$), then
\[
\mu^{(1)}=(2,3),\quad \mu^{(2)}=(1),\quad \mu^{(3)}=(2,1,1),
\]
and the $\mu$-alphabet is
\[
U_\mu=\{\,1_1,1_1,\;2_1,2_1,2_1,\;1_2,\;1_3,1_3,\;2_3,\;3_3\,\}.
\]
\end{example}

\begin{definition}
Given a multipartition $\lambda\vdash_\ell n$, we say that a \textit{multitableau of shape $\lambda$} is a an $\ell$-tuple $T=\left(T^{(1)},T^{(2)},\ldots,T^{(\ell)}\right)$ such that each $T^{(i)}\in\text{SST}(\mathcal{S},\lambda^{(i)})$. 
Given an $\ell$-multicomposition $\mu\vDash_\ell n$ with associated $\mu$-alphabet $U_\mu\subset S$, we say that $T$ has \emph{content $\mu$} if the multiset of entries appearing in $T^{(1)}\sqcup\cdots\sqcup T^{(\ell)}$ is exactly $U_\mu$. We call $T$ \emph{semistandard} when it satisfies the additional {\it flagging condition}: a letter $a_b$ can occur in $T^{(i)}$ only if $b\ge i$. Let $\text{SST}_\ell(\lambda,\mu)$ denote the set of semistandard multitableaux of shape $\lambda$ and content $\mu$.
\end{definition}

\begin{example}
The multitableau $R=\big(T^{(1)},T^{(2)},T^{(3)}\big)$ below is semistandard with shape $\lambda=(32,2,21)$ and content $\mu=(13,11,22)$
\begin{center}
$
R=
\left(\,
\begin{ytableau}
1_1&2_1&2_1\\
2_1&1_2
\end{ytableau}
\,\mathhuger{,}\,
\begin{ytableau}
2_2&1_3
\end{ytableau}
\,\mathhuger{,}\,
\begin{ytableau}
1_3&2_3\\
2_3
\end{ytableau}
\right)
$.
\end{center}
\end{example}

\subsection{Chicken-foot matrices}

The cyclotomic web category is connected in \cite{wang2} to a combinatorial model called 
(elementary) chicken foot diagrams.  
The diagrammatic parameters indexing the cyclotomic web basis may be encoded by certain partition-enriched block matrices.  These matrices are packaged into a set
\[
\mathrm{ParMat}^{\flat}_{\nu,\mu},
\]
indexed by multicompositions $\nu,\mu \vDash_\ell n$ which record the prescribed row and column data of the underlying block matrix.
Informally, an element of $\mathrm{ParMat}^{\flat}_{\nu,\mu}$ consists of:
\begin{itemize}
\item an $\ell\times\ell$ block matrix of nonnegative integers whose block row and column sums are fixed by $\nu$ and $\mu$, and
\item a choice of a partition in each matrix entry, subject to natural size and length constraints determined by the block position.
\end{itemize}

Precisely, an \textit{$\ell\times\ell$ block $\NN$-matrix} is a collection $A=\{A^{(pq)}\}_{1\leq p,q\leq\ell}$ of $\NN$-matrices. We denote the entry $(i,j)$ of the matrix $A^{(pq)}$ by $a_{ij}^{(pq)}$. The row and column sums of $A$ are the $\ell$-multicompositions $\mathrm{row}(A)$ and $\mathrm{col}(A)$ defined by
\begin{align*}
\text{row}(A)^{(s)}_t=\sum_{q=1}^\ell\sum_{j}a^{(sq)}_{tj}\qquad\text{and}\qquad
\text{col}(A)^{(s)}_t=\sum_{p=1}^\ell\sum_{i}a^{(ps)}_{it}.
\end{align*}

A \textit{partition matrix} is a matrix whose entries are partitions. Equivalently, it is a map $P:\NN^2\rightarrow\mathcal{P}$, where $\mathcal{P}$ is the set of all integer partitions, such that all but finitely many entries are the empty partition. We write $P=(\eta_{ij})$ where $\eta_{ij}$ is the partition in position $(i,j)$.
\begin{definition}
  Fix $\nu,\mu\vDash_\ell n$. The set $\mathrm{ParMat}^{\flat}_{\nu,\mu}$ consists of all pairs $(A,P)$ where:
  \begin{enumerate}
    \item $A=\{A^{(pq)}\}_{1\leq p,q\leq\ell}$ is an $\ell\times\ell$-block $\NN$-matrix.
    \item The block matrix $A$ satisfies $\text{row}(A)=\nu$ and $\text{col}(A)=\mu$.
    \item $P=\{P^{(pq)}\}_{1\leq p,q\leq\ell}$ is a collection of partition matrices $P^{(pq)}$.
    \item For each $(p,q,i,j)$, the largest part of $\eta^{(pq)}_{ij}$ is at most $a^{(pq)}_{ij}$.
    \item For each $(p,q,i,j)$, the partition $\eta^{(pq)}_{ij}$ satisfies $\text{length}(\eta_{ij}^{(pq)})\leq\min(p,q)-1$.
  \end{enumerate}
\end{definition}

\begin{example}\label{ex:ParMat-AP}
  For $\nu = (212,3,23)$ and $\mu=(13,3,222)$ the pair $(A,P)$ defines an element of $\mathrm{ParMat}^{\flat}_{\nu,\mu}$ where

\begin{center}
$A=$
\begin{tabular}{c c | c | c c c}
$0$ & $2$ & $0$ & $0$ & $0$ & $0$\\
$0$ & $0$ & $0$ & $1$ & $0$ & $0$\\
$1$ & $0$ & $0$ & $0$ & $0$ & $1$\\
\hline
$0$ & $0$ & $3$ & $0$ & $0$ & $0$\\
\hline
$0$ & $0$ & $0$ & $1$ & $0$ & $1$\\
$0$ & $1$ & $0$ & $0$ & $2$ & $0$\\
\end{tabular}
and
$P=$
\begin{tabular}{c c | c | c c c}
$\varnothing$ & $\varnothing$ & $\varnothing$ & $\varnothing$ & $\varnothing$ & $\varnothing$\\
$\varnothing$ & $\varnothing$ & $\varnothing$ & $\varnothing$ & $\varnothing$ & $\varnothing$\\
$\varnothing$ & $\varnothing$ & $\varnothing$ & $\varnothing$ & $\varnothing$ & $\varnothing$\\
\hline
$\varnothing$ & $\varnothing$ & $3$ & $\varnothing$ & $\varnothing$ & $\varnothing$\\
\hline
$\varnothing$ & $\varnothing$ & $\varnothing$ & $11$ & $\varnothing$ & $1$\\
$\varnothing$ & $\varnothing$ & $\varnothing$ & $\varnothing$ & $21$ & $\varnothing$\\
\end{tabular}
,\begin{tabular}{c}
$\text{row}(A)=(212,3,23)$\\
$\text{col}(A)=(13,3,222)$
\end{tabular}.
\end{center}
  Later in Example~\ref{ex:BCM-with-B} we will reorganize this pair into a single block matrix, which is more compatible with a componentwise RSK.
\end{example}

\subsection{Flagged block composition matrices}
The partition matrices indexing the cyclotomic web basis are not well suited for direct comparison with tableau combinatorics. Instead,
 we find an equivalent but more combinatorially transparent
model for these data. The
reformulation realizes elements of $\text{ParMat}^\flat_{\nu,\mu}$
as 
$\ell \times \ell$ block composition matrices, whose entries record multiplicities of
biletters subject to explicit length constraints. In this form the data decompose naturally into $\ell$
independent components. This repackaging is the key step in our proof of Conjecture~\ref{conj}.

A \textit{composition matrix} is a matrix whose entries are compositions. More precisely, a composition matrix is a map $A:\NN^2\rightarrow\mathcal{C}$, where $\mathcal{C}$ is the set of all integer compositions, such that all but finitely many entries are the empty composition. We write $A=(a_{ij})$ where $a_{ij}$ is the partition in position $(i,j)$.

\begin{definition}
A \textit{flagged $\ell\times\ell$ block composition matrix} is a collection $\{B^{(pq)}\}_{1\leq p,q\leq\ell}$ of composition matrices $B^{(pq)}=\left(b^{(pq)}_{ij}\right)$ such that for each $p,q$, the entries of the matrix $B^{(pq)}$ satisfy
\[
\text{length}(b_{ij}^{(pq)})\leq\text{min}(p,q),
\]
for all $i,j$. We denote by $\text{BCM}_\ell$ the set of all $\ell\times\ell$ block composition matrices.
\end{definition}
For $B\in\text{BCM}_\ell$, we define the row and column sums to be the $\ell$-multicompositions $\text{row}(B)$, $\text{col}(B)$ defined by
\begin{align*}
\text{row}(B)^{(s)}_t=\sum_{q=1}^\ell\sum_{j}\left|b^{(sq)}_{tj}\right|\qquad\text{and}\qquad
\text{col}(B)^{(s)}_t=\sum_{p=1}^\ell\sum_{i}\left|b^{(ps)}_{it}\right|.
\end{align*}
For $\mu,\nu\vDash_\ell n$ we let
\begin{equation*}
\text{BCM}_\ell(\nu,\mu)\coloneq\left\{B\in\text{BCM}_\ell\setmid \begin{tabular}{c}$\text{row}(B)=\nu$\\$\text{col}(B)=\mu$\end{tabular}\right\}.
\end{equation*}
\begin{example}\label{ex:BCM-with-B}
For $\nu = (212,3,23)$ and $\mu=(13,3,222)$
A flagged $3\times3$ block composition matrix
in $BCM_3(\nu,\mu)$ is

\begin{center}
$B=$
\begin{tabular}{c c | c | c c c}
$\varnothing$ & $2$ & $\varnothing$ & $\varnothing$ & $\varnothing$ & $\varnothing$\\
$\varnothing$ & $\varnothing$ & $\varnothing$ & $1$ & $\varnothing$ & $\varnothing$\\
$1$ & $\varnothing$ & $\varnothing$ & $\varnothing$ & $\varnothing$ & $1$\\
\hline
$\varnothing$ & $\varnothing$ & $03$ & $\varnothing$ & $\varnothing$ & $\varnothing$\\
\hline
$\varnothing$ & $\varnothing$ & $\varnothing$ & $001$ & $\varnothing$ & $01$\\
$\varnothing$ & $1$ & $\varnothing$ & $\varnothing$ & $011$ & $\varnothing$\\
\end{tabular},
\begin{tabular}{c}
$\text{row}(B)=(212,3,23)$\\
$\text{col}(B)=(13,3,222)$
\end{tabular}.
\end{center}
In particular,  $B^{(12)}=B^{(21)}=B^{(23)}=B^{(32)}$ are all the empty composition matrix and
 $B^{(13)}$ is the composition matrix 
\begin{tabular}{c c c}
$\varnothing$ & $\varnothing$ & $\varnothing$\\
$1$ & $\varnothing$ & $\varnothing$ \\
$\varnothing$ & $\varnothing$ & $1$\\
\end{tabular}.
\end{example}

We now explain how flagged block composition matrices recover the original
cyclotomic parameters.

\begin{lemma}\label{lem:parmat-to-bcm}
Fix $\mu,\nu \vDash_\ell n$. There is a bijection
\begin{align*}
\mathrm{ParMat}^{\flat}_{\nu,\mu}&\longleftrightarrow\text{BCM}_\ell(\nu,\mu),\\
(A,P)&\mapsto B,
\end{align*}
satisfying $a_{ij}^{(pq)}=\left|b^{(pq)}_{ij}\right|$ and $\mathrm{length}\left(\eta_{ij}^{(pq)}\right)=\mathrm{length}\left(b_{ij}^{(pq)}\right)-1$.
\end{lemma}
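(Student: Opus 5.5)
The bijection will be built entry by entry. Conditions (1)–(5) in the definition of $\mathrm{ParMat}^{\flat}_{\nu,\mu}$ and the length condition in $\mathrm{BCM}_\ell$ are all local, meaning they constrain each position $(p,q,i,j)$ separately. The only global constraints are the row and column sums. So it suffices to fix $m=\min(p,q)$ and give, for each $a\ge 0$, a bijection
\[
\left\{\eta\in\mathcal P \setmid \eta_1\le a,\ \mathrm{length}(\eta)\le m-1\right\}\longleftrightarrow\left\{b\in\mathcal C\setmid |b|=a,\ \mathrm{length}(b)\le m\right\},
\]
with $\mathrm{length}(b)=\mathrm{length}(\eta)+1$. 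If this holds with $|b|=a$, then $\mathrm{row}(B)=\mathrm{row}(A)=\nu$ and $\mathrm{col}(B)=\mathrm{col}(A)=\mu$ follow immediately from the definitions of the two kinds of row and column sums.

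The map will be the "difference" map suggested by Example~\ref{ex:BCM-with-B}. Given $a>0$ and $\eta=(\eta_1\ge\cdots\ge\eta_k>0)$ with $\eta_1\le a$, set
\[
b=(a-\eta_1,\ \eta_1-\eta_2,\ \ldots,\ \eta_{k-1}-\eta_k,\ \eta_k).
\]
This gives the following on the example:
\begin{itemize}
\item $(3,\eta=3)\mapsto 03$;
\item $(2,\eta=21)\mapsto 011$;
\item $(1,\eta=11)\mapsto 001$;
\item $(1,\eta=\varnothing)\mapsto 1$.
\end{itemize}
Every entry of $b$ is nonnegative because $\eta$ is weakly decreasing and $\eta_1\le a$. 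The sum telescopes to $|b|=a$. The length is $k+1\le m$, and the last part $\eta_k$ is positive. The inverse recovers $\eta$ from $b=(b_1,\ldots,b_{k+1})$ by right partial sums, $\eta_r=b_{r+1}+\cdots+b_{k+1}$ for $1\le r\le k$. The resulting $\eta$ is weakly decreasing, has positive last part exactly when $b_{k+1}>0$, and has largest part $\eta_1=a-b_1\le a$. The two maps are checked to be mutually inverse by a one-line telescoping computation.

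The main point to settle is a convention rather than a computation. The identity $\mathrm{length}(\eta)=\mathrm{length}(b)-1$ forces the compositions occurring as entries of $B$ to be counted with all their internal zeros, as in $001$. It also forces them to have a positive last part, since otherwise $(2,0)$ and $(2)$ would both be allowed and injectivity fails. I would make this explicit: entries of a composition matrix are either empty or end in a positive part, which matches every entry in Example~\ref{ex:BCM-with-B}. The case $a=0$ is treated separately. Condition (4) forces $\eta=\varnothing$, and we send it to the empty composition. This matches $|b|=0$, and the length identity in the lemma is then read as applying to the nonzero entries. With these conventions fixed, assembling the entrywise bijections over all $(p,q,i,j)$ gives the bijection $\mathrm{ParMat}^{\flat}_{\nu,\mu}\leftrightarrow\mathrm{BCM}_\ell(\nu,\mu)$ with the stated properties.
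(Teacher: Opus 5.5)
Your proposal is correct and is essentially the paper's proof. The paper uses the same entrywise difference map $c_t=\eta_{t-1}-\eta_t$ with $\eta_0=a$, inverts it by right partial sums, and assembles entrywise; row and column sums are preserved because $|b|=a$. Your explicit conventions are left implicit in the paper: entries of $B$ keep internal zeros but end in a positive part, and $a=0$ goes to the empty composition, where the length identity is vacuous. They are genuinely needed, because the paper's definition of a composition as any vector of nonnegative integers would otherwise admit entries like $(2,0)$ or $(0)$ with no preimage, and the bijection would fail.
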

\begin{proof}
There is a natural, length-respecting bijection between compositions of $k$ and partitions with parts at most $k$: if $\lambda$ is such a partition,
the associated composition $c$ is given by
\(
c_t=\lambda_{t-1}-\lambda_t,
\)
with the convention $\lambda_0=k$ and $\lambda_t=0$ for $t>\ell(\lambda)$.
The inverse map recovers $\lambda$ from $c$ by taking partial sums
$\lambda_t=\sum_{s>t} c_s$.

We apply this bijection componentwise on the entries of $(A,P)$. More precisely, the element $(A,P)=\bigl((a_{ij}^{(pq)}),(\eta_{ij}^{(pq)})\bigr)
\in \mathrm{ParMat}^\flat_{\nu,\mu}$
corresponds to $B\in \text{BCM}_\ell(\nu,\mu)$ by setting
\begin{equation*}
\left(b^{(pq)}_{ij}\right)_t=\left(\eta_{ij}^{(pq)}\right)_{t-1}-\left(\eta_{ij}^{(pq)}\right)_t,
\end{equation*}
where we take the convention that $\left(\eta_{ij}^{(pq)}\right)_0\coloneq a_{ij}^{(pq)}$ and that $\left(\eta_{ij}^{(pq)}\right)_k=0$ when $k>\mathrm{length}(\eta_{ij}^{(pq)})$.
\end{proof}

\section{Flagged biwords}
The identification of ${\rm ParMat}$ with flagged block composition matrices allows us to approach defining a level-$\ell$ RSK construction of Conjecture~\ref{conj} by instead bijecting $\mathrm{BCM}_\ell$ with 
pairs of multitableaux. The advantage of this approach is that 
we can extend the familiar bijection between $\NN$-matrices and $\text{BW}(\NN)$ to map flagged block composition matrices onto
 a collection of $\ell${\it-flagged biwords} to which we can then apply classical RSK to obtain pairs of semistandard multitableaux.

\begin{definition}
Let $\mu,\nu\vDash_\ell n$. A \textit{$(\nu,\mu)$-flagged biword} is an $\ell$-tuple $w=(w^{(1)},\omega^{(2)},\ldots,w^{(\ell)})$ of biwords $w^{(i)}\in\text{BW}(\mathcal{S})$ satisfying:
\begin{enumerate}
\item If the letter $a_b$ appears in the biword $w^{(i)}$ then $b\geq i$.
\item The multiset union of top rows $w^{(1)}_\text{top}\sqcup w^{(2)}_\text{top}\sqcup\cdots\sqcup w^{(\ell)}_\text{top}$ forms the $\nu$-alphabet.
\item The multiset union of bottom rows $w^{(1)}_\text{bot}\sqcup w^{(2)}_\text{bot}\sqcup\cdots\sqcup w^{(\ell)}_\text{bot}$ forms the $\mu$-alphabet.
\end{enumerate}
Let $\text{BW}_\ell(\nu,\mu)$ denote the set of all $(\nu,\mu)$-flagged biwords. 
\end{definition}

\begin{example}\label{uvword}
For $\nu=(212,3,23)$ and $\mu=(13,3,222)$, a $(\nu,\mu)$-flagged biword is given by
 $w=(w^{(1)},w^{(2)},w^{(3)})$ where
$$w^{(1)}=\begin{pmatrix}1_1\ 1_1\ 2_1 \ 3_1 \ 3_1\ 2_3 \\ 2_1\ 2_1 \ 1_3 \ 1_1 \ 3_3\ 2_1\end{pmatrix}\,,
w^{(2)}=\begin{pmatrix}1_2\ 1_2\ 1_2 \ 1_3 \ 2_3 \\ 1_2\ 1_2 \ 1_2 \ 3_3\ 2_3\end{pmatrix}\,,\quad\text{and}\quad
w^{(3)}=\begin{pmatrix}1_3\ 2_3\\ 1_3\ 2_3\end{pmatrix}$$   
\end{example}

\begin{lemma}\label{lem:BCM-to-biwords}
Fix $\mu,\nu \models_\ell n$. There is a natural bijection
\[
BCM_\ell(\nu,\mu)\;\longleftrightarrow\; BW_\ell(\nu,\mu),
\]
between flagged block composition matrices and $(\nu,\mu)$--flagged biwords.
\end{lemma}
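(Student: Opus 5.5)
The bijection should be the flagged, componentwise analogue of the classical correspondence between $\NN$-matrices and biwords recalled before \eqref{e rskmatrix}. The entry $b^{(pq)}_{ij}$ sits in block $(p,q)$ at position $(i,j)$, so it naturally records biletters $\begin{pmatrix} i_p \\ j_q\end{pmatrix}$ with top letter in $\mathcal S$ of block index $p$ and bottom letter of block index $q$. The composition $b^{(pq)}_{ij}=(c_1,\dots,c_m)$, with $m\le\min(p,q)$, distributes these biletters among the components of the flagged biword: its $t$-th part $c_t$ is the number of copies of $\begin{pmatrix} i_p \\ j_q\end{pmatrix}$ placed in $w^{(t)}$.

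Concretely, given $B\in\mathrm{BCM}_\ell(\nu,\mu)$, I define $w^{(t)}$ for $1\le t\le\ell$ as the unique biword in $\mathrm{BW}(\mathcal S)$ whose biletter $\begin{pmatrix} i_p \\ j_q\end{pmatrix}$ occurs with multiplicity $(b^{(pq)}_{ij})_t$. Parts beyond the length of the composition are taken to be $0$. The biword is obtained by listing these biletters in lexicographic order with respect to the order \eqref{eq:S-order}. The map is well defined because a finite multiset of biletters determines exactly one lexicographically sorted biword.

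I then check the three conditions in the definition of a flagged biword.
\begin{enumerate}
\item \emph{Flagging.} If $\begin{pmatrix} i_p \\ j_q\end{pmatrix}$ occurs in $w^{(t)}$, then $(b^{(pq)}_{ij})_t>0$. Hence $t\le\mathrm{length}(b^{(pq)}_{ij})\le\min(p,q)$, which gives $p\ge t$ and $q\ge t$.
\item \emph{Top rows.} The multiplicity of $s_r$ in $w^{(1)}_{\mathrm{top}}\sqcup\cdots\sqcup w^{(\ell)}_{\mathrm{top}}$ is
\[
\sum_t\sum_{q,j}(b^{(rq)}_{sj})_t=\sum_{q,j}\bigl|b^{(rq)}_{sj}\bigr|=\mathrm{row}(B)^{(r)}_s=\nu^{(r)}_s,
\]
so the union of the top rows is the $\nu$-alphabet.
\item \emph{Bottom rows.} The same computation with column sums shows that the union of the bottom rows is the $\mu$-alphabet.
\end{enumerate}

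For the inverse, take $w\in\mathrm{BW}_\ell(\nu,\mu)$. Let $(b^{(pq)}_{ij})_t$ be the number of occurrences of $\begin{pmatrix} i_p \\ j_q\end{pmatrix}$ in $w^{(t)}$ for $t=1,\dots,\min(p,q)$. By the flagging condition such biletters cannot occur in $w^{(t)}$ for $t>\min(p,q)$, so no information is lost. Truncating after position $\min(p,q)$ gives a composition of length at most $\min(p,q)$, so $B$ is flagged. Reading the multiset computations above backwards shows that $\mathrm{row}(B)=\nu$ and $\mathrm{col}(B)=\mu$.

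The two maps are mutually inverse. A biword is determined by its multiset of biletters, and $B$ is determined by the multiplicity vectors.

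The main subtlety, rather than a genuine obstacle, is a convention about compositions. Trailing zero parts must not be dropped, since for instance $03$ and $3$ are different entries, as in Example~\ref{ex:BCM-with-B}. The inverse map must therefore record zeros in positions $t\le\min(p,q)$ consistently. I will fix the convention that a composition is recorded up to its last nonzero part; under this convention it is determined by the multiplicities, and so the inverse is well defined. It is also worth checking the construction against Examples~\ref{ex:BCM-with-B} and~\ref{uvword}. For instance, the entry $001$ of $B^{(33)}$ at position $(1,1)$ yields the biletter $\begin{pmatrix} 1_3 \\ 1_3\end{pmatrix}$ in $w^{(3)}$.
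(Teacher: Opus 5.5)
Your proposal is correct and follows the paper's proof: you take the $t$-th part of $b^{(pq)}_{ij}$ as the multiplicity of $\binom{i_p}{j_q}$ in $w^{(t)}$ and invert by counting biletters, and your explicit flagging and row/column-sum checks (top rows giving the $\nu$-alphabet, bottom rows the $\mu$-alphabet) fill in steps the paper only states. One wording fix: $03$ versus $3$ is about a leading zero, and the convention you adopt (recording a composition only up to its last nonzero part) means trailing zeros \emph{are} discarded, which is exactly what makes the forward map injective, since otherwise $(3)$ and $(3,0)$ would give the same flagged biword.
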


\begin{proof}
Fix $\mu,\nu\vDash_\ell n$. 
Let $B = \{B^{(pq)}\} \in BCM_\ell(\nu,\mu)$, where
$B^{(pq)} = (b^{(pq)}_{ij})$ is a composition matrix. From $B$, we define $w=(w^{(1)},w^{(2)},\ldots,w^{(\ell)})\in\text{BW}_\ell(\mu,\nu)$ by declaring that
\begin{equation*}
\text{the biletter }\begin{pmatrix}i_p\\j_q\end{pmatrix}\text{ in }w^{(t)} \text{ occurs with multiplicity  }\left(b^{(pq)}_{ij}\right)_t\,,
\end{equation*}
where $\left(b^{(pq)}_{ij}\right)_t$ is the $t^\text{th}$ part of the composition $b^{(pq)}_{ij}$.
In particular, if $w^{(t)}$ is of the form
\begin{equation*}
w^{(t)}=\begin{pmatrix}\alpha_1\ \alpha_2 \ \cdots \ \alpha_s \\ \beta_1 \ \beta_2 \ \cdots \ \beta_s\end{pmatrix}
,\end{equation*}
for some $s\geq 0$, then for each $1\leq p,q\leq\ell$ and $i,j\geq 1$ we have
\begin{equation*}
\#\left\{k\in[s]\setmid\begin{pmatrix}\alpha_k\\\beta_k\end{pmatrix}=\begin{pmatrix}i_p\\j_q\end{pmatrix}\right\}=\left(b^{(pq)}_{ij}\right)_t.
\end{equation*}
The length condition $\ell(b^{(pq)}_{ij}) \le \min(p,q)$ ensures that
$w^{(t)}$ satisfies the flagging condition, and the row and column sum
conditions imply that the multiset unions of the top and bottom rows of
$\{w^{(t)}\}_{t=1}^\ell$ form the $\mu$-- and $\nu$--alphabets, respectively.
Thus $w = (w^{(1)},\dots,w^{(\ell)})$ is a $(\nu,\mu)$--flagged biword.

It is straightforward to reconstruct $B$ from $w$ by setting each 
\begin{equation*}
\left(b^{(pq)}_{ij}\right)_t=\#\text{ of }\begin{pmatrix}i_p\\j_q\end{pmatrix}\text{ appearing in }\omega^{(t)}\,.
\end{equation*}
\end{proof}

\begin{example}
For  $\ell=3$,  $\mu=(212,3,23)$ and $\nu=(13,3,222)$, the flagged compositional
block matrix $B$ from Example~\ref{ex:BCM-with-B}, written with 
a letter-coordinate key, is sent to the flagged multi biword of 
Example~\ref{uvword}\\
\begin{tabular}{c | c c | c | c c c}
\color{violet}\text{key:}& \color{violet}$1_1$ & \color{violet}$2_1$ & \color{violet}$1_2$ & \color{violet}$1_3$ & \color{violet}$2_3$ & \color{violet}$3_3$\\
\hline
{\color{violet}$1_1$}&$\varnothing$ & $2$ & $\varnothing$ & $\varnothing$ & $\varnothing$ & $\varnothing$\\
{\color{violet}$2_1$}&$\varnothing$ & $\varnothing$ & $\varnothing$ & $1$ & $\varnothing$ & $\varnothing$\\
{\color{violet}$3_1$}&$1$ & $\varnothing$ & $\varnothing$ & $\varnothing$ & $\varnothing$ & $1$\\
\hline
{\color{violet}$1_2$}&$\varnothing$ & $\varnothing$ & $03$ & $\varnothing$ & $\varnothing$ & $\varnothing$\\
\hline
{\color{violet}$1_3$}&$\varnothing$ & $\varnothing$ & $\varnothing$ & $001$ & $\varnothing$ & $01$\\
{\color{violet}$2_3$}&$\varnothing$ & $1$ & $\varnothing$ & $\varnothing$ & $011$ & $\varnothing$\\
\end{tabular}
$\longleftrightarrow$
$\left(\begin{pmatrix}1_1\ 1_1\ 2_1 \ 3_1 \ 3_1\ 2_3 \\ 2_1\ 2_1 \ 1_3 \ 1_1 \ 3_3\ 2_1\end{pmatrix},
\begin{pmatrix}1_2\ 1_2\ 1_2 \ 1_3 \ 2_3 \\ 1_2\ 1_2 \ 1_2 \ 3_3\ 2_3\end{pmatrix}\,,
\begin{pmatrix}1_3\ 2_3\\ 1_3\ 2_3\end{pmatrix}\right)$.
\end{example}
\begin{lemma}\label{lem:biwords-to-tableaux}
Fix $\mu,\nu \models_\ell n$. There is a bijection
\[
BW_\ell(\nu,\mu)\;
\longleftrightarrow\coprod_{\lambda\vdash_\ell n}\text{\rm SST}_\ell(\lambda,\mu)\times\text{\rm SST}_\ell(\lambda,\nu),
\]
obtained by applying the classical Robinson--Schensted--Knuth correspondence
componentwise, after an order-preserving relabeling of the $\mu$-- and
$\nu$--alphabets.
\end{lemma}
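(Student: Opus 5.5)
Given a flagged biword $w=(w^{(1)},\dots,w^{(\ell)})\in \mathrm{BW}_\ell(\nu,\mu)$, the plan is to apply Theorem~\ref{classicalRSK} to each component $w^{(t)}$ separately. Each $w^{(t)}$ is a biword over the totally ordered set $\mathcal S$ with \eqref{eq:S-order}. Only finitely many letters occur, so there is an order-preserving injection $\phi:\{\text{letters of }U_\nu\cup U_\mu\}\to\NN$. We relabel $w^{(t)}$ by $\phi$, apply classical RSK to obtain $(P,Q)$, and pull the entries back along $\phi^{-1}$. RSK is built from row insertion, which uses only comparisons of letters, so its output commutes with order-preserving relabeling. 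Hence RSK on $\mathrm{BW}(\mathcal S)$ restricted to a fixed finite alphabet is a well-defined bijection onto pairs of semistandard $\mathcal S$-tableaux of common shape. The bottom content of the biword is recorded by $P$ and the top content by $Q$. Writing $w^{(t)}\mapsto (P^{(t)},Q^{(t)})$ of shape $\lambda^{(t)}$, we set $\lambda=(\lambda^{(1)},\dots,\lambda^{(\ell)})\vdash_\ell n$, $P=(P^{(1)},\dots,P^{(\ell)})$ and $Q=(Q^{(1)},\dots,Q^{(\ell)})$. Here $\lambda^{(t)}$ is a partition of the length of $w^{(t)}$, and these lengths sum to $n$ by conditions (2) and (3).

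Next we check that the image lands in $\mathrm{SST}_\ell(\lambda,\mu)\times\mathrm{SST}_\ell(\lambda,\nu)$. The multiset of entries of $P^{(t)}$ equals the multiset of letters in $w^{(t)}_{\mathrm{bot}}$ by Theorem~\ref{classicalRSK}. Taking the union over $t$ and using condition (3), the entries of $P$ form $U_\mu$. Likewise, condition (2) shows the entries of $Q$ form $U_\nu$. For flagging, a letter $a_b$ appearing in $P^{(t)}$ or $Q^{(t)}$ must already appear in $w^{(t)}$, so $b\ge t$ by condition (1). Each component is semistandard by the classical theorem.

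For the inverse, start from $(P,Q)$ of common shape $\lambda$. Apply inverse RSK to each pair $(P^{(t)},Q^{(t)})$, again after relabeling by $\phi$, to get a biword $w^{(t)}$. The same multiset arguments, run backwards, give conditions (1)--(3). Since each letter of $w^{(t)}$ lies in $P^{(t)}$ or $Q^{(t)}$, the flagging of the tableaux gives the flagging of the biword. The two maps are mutually inverse because they are inverse componentwise.

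The main obstacle is the relabeling step: RSK must be genuinely independent of the chosen embedding $\phi$, and the content statement must be read as equality of multisets over $\mathcal S$ rather than as an $\NN$-composition. I would handle this by noting that insertion is defined purely via the total order, so $\phi$ commutes with bumping. The rest is bookkeeping; the essential point is that flagging is a condition on which letters occur, which RSK preserves.
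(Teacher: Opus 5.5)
Your proposal is correct and follows essentially the paper's route: you relabel by an order-preserving injection into $\NN$, apply classical RSK to each component $w^{(t)}$, and pull back. Content and flagging then follow because $P^{(t)}$ and $Q^{(t)}$ contain exactly the bottom and top letters of $w^{(t)}$. The only differences are cosmetic: you use a single relabeling on $U_\nu\cup U_\mu$ rather than separate maps $\phi_\nu,\phi_\mu$, and you explicitly check flagging and content in the inverse direction, which the paper leaves implicit.
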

\begin{proof}
Let $w=(w^{(1)},\dots,w^{(\ell)})\in BW_\ell(\nu,\mu)$.
Construct the pair $(P,Q)$ of semistandard multitableaux componentwise using the usual RSK correspondence of Thm~\ref{classicalRSK}, where
we set $(P^{(i)},Q^{(i)})=\text{RSK}(w^{(i)})$, but with the modification that  the (totally ordered) $\mu$- and $\nu$-alphabets serve in place of the usual letters $[n]$.

Precisely, let $U_\mu,U_\nu\subset S$ denote the $\mu$-- and $\nu$--alphabets.
Choose order-preserving injections
\[
\phi_\mu: U_\mu \hookrightarrow \mathbb{N},
\qquad
\phi_\nu: U_\nu \hookrightarrow \mathbb{N},
\]
for example by mapping the $k$th smallest element of each alphabet to $k$.
For a biword
\[
\pi=\begin{pmatrix}\alpha_1&\cdots&\alpha_s\\ \beta_1&\cdots&\beta_s\end{pmatrix}
\in BW(S),
\]
define $\phi(\pi)\in BW(\mathbb{N})$ by applying $\phi_\nu$ to the top row and
$\phi_\mu$ to the bottom row.
Then, for each $1\le t\le\ell$, apply the classical RSK correspondence to the relabeled
biword $\phi(w^{(t)})$ to obtain a pair of tableaux
\[
(\widetilde P(t),\widetilde Q(t)),
\]
of common shape with entries in $\mathbb{N}$.
Finally,
 replace each entry of $\widetilde P(t)$ and $\widetilde Q(t)$ by its unique preimage
under $\phi_\mu$ and $\phi_\nu$, respectively, producing tableaux
$P(t)$ and $Q(t)$ with entries in the original alphabets.

The flagging condition on $w$ guarantees that entries of $P(t)$ and $Q(t)$ satisfy
the level restrictions, while the content conditions ensure that the collections
$\{P(t)\}$ and $\{Q(t)\}$ assemble into semistandard multitableaux of contents
$\mu$ and $\nu$. Since classical RSK depends only on the relative order of letters,
the resulting multitableaux are independent of the chosen relabeling.

Invertibility follows from invertibility of classical RSK together with the
bijectivity of the relabeling maps.
\end{proof}

\begin{example}
Taking individually the biword components $w^{(1)}$, $w^{(2)}$, and $w^{(3)}$ from Example \ref{uvword}, we apply a relabled version of classical RSK 
$$w^{(1)}=\begin{pmatrix}1_1\ 1_1\ 2_1 \ 3_1 \ 3_1\ 2_3 \\ 2_1\ 2_1 \ 1_3 \ 1_1 \ 3_3\ 2_1\end{pmatrix}\mapsto\left(
\begin{ytableau}1_1&2_1&2_1&3_3\\2_1&1_3\end{ytableau}\,\mathhuger{,}\, \begin{ytableau}1_1&1_1&2_1&3_1\\3_1&2_3\end{ytableau}\right)=(P^{(1)},Q^{(1)}),$$
$$w^{(2)}=\begin{pmatrix}1_2\ 1_2\ 1_2 \ 1_3 \ 2_3 \\ 1_2\ 1_2 \ 1_2 \ 3_3\ 2_3\end{pmatrix}\mapsto\left(
\begin{ytableau}1_2&1_2&1_2&2_3\\3_3\end{ytableau}\,\mathhuger{,}\, \begin{ytableau}1_2&1_2&1_2&1_3\\2_3\end{ytableau}\right)=(P^{(2)},Q^{(2)}),$$
$$w^{(3)}=\begin{pmatrix}1_3\ 2_3\\ 1_3\ 2_3\end{pmatrix}\mapsto\left(
\begin{ytableau}1_3&2_3\end{ytableau}\,\mathhuger{,}\, \begin{ytableau}1_3&2_3\end{ytableau}\right)=(P^{(3)},Q^{(3)}).$$
The relabling involves temporarily changing the alphabet to $\NN$, for example with $w^{(2)}$

\begin{center}
\resizebox{\linewidth}{!}{$
\begin{pmatrix}1_2\ 1_2\ 1_2 \ 1_3 \ 2_3 \\ 1_2\ 1_2 \ 1_2 \ 3_3\ 2_3\end{pmatrix}
\mapsto
\begin{pmatrix}1\ 1\ 1 \ 2 \ 3 \\ 1\ 1 \ 1 \ 4\ 3\end{pmatrix}
\mapsto
\left(
\begin{ytableau}1&1&1&3\\4\end{ytableau}\,\mathhuger{,}\,
\begin{ytableau}1&1&1&2\\3\end{ytableau}
\right)
\mapsto
\left(
\begin{ytableau}1_2&1_2&1_2&2_3\\3_3\end{ytableau}\,\mathhuger{,}\,
\begin{ytableau}1_2&1_2&1_2&1_3\\2_3\end{ytableau}
\right)
$}.
\end{center}

\end{example}

\section{The correspondence}
\begin{theorem}
Fix $\mu,\nu \models_\ell n$. Applying the classical RSK correspondence of Thm~\ref{classicalRSK}
componentwise defines a bijection
\begin{equation}
\text{ParMat}^\flat_{\nu,\mu}
\longleftrightarrow\coprod_{\lambda\vdash_\ell n}\text{\rm SST}_\ell(\lambda,\mu)\times\text{\rm SST}_\ell(\lambda,\nu).
\end{equation}
When restricted to multipartitions satisfying
$\mu^{(\ell)}=\nu^{(\ell)}=\lambda^{(\ell)}=\varnothing$, this bijection agrees with the
level--$(\ell-1)$ correspondence. For $\ell=1$, it reduces to the usual RSK
correspondence.
\end{theorem}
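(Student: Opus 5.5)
The bijection is the composite of the three lemmas already established:
\[
\mathrm{ParMat}^\flat_{\nu,\mu}\xrightarrow{\ \text{Lemma~\ref{lem:parmat-to-bcm}}\ }\mathrm{BCM}_\ell(\nu,\mu)\xrightarrow{\ \text{Lemma~\ref{lem:BCM-to-biwords}}\ }\mathrm{BW}_\ell(\nu,\mu)\xrightarrow{\ \text{Lemma~\ref{lem:biwords-to-tableaux}}\ }\coprod_{\lambda\vdash_\ell n}\mathrm{SST}_\ell(\lambda,\mu)\times\mathrm{SST}_\ell(\lambda,\nu).
\]
A composite of bijections is a bijection, and the only nontrivial ingredient is the componentwise classical RSK in the last step. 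So the first assertion follows once I check that the three lemmas are stated with matching indices. The lemma statements use $(\nu,\mu)$ consistently, even though the proof of Lemma~\ref{lem:BCM-to-biwords} writes $\mathrm{BW}_\ell(\mu,\nu)$ in one place.

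While checking this, I will also confirm that the content conventions line up. Row sums of $B$ become top rows, and hence the $\nu$-alphabet. Column sums become bottom rows, hence the $\mu$-alphabet. By Theorem~\ref{classicalRSK}, the insertion tableau $P$ has the bottom-row content $\mu$ and the recording tableau $Q$ has the top-row content $\nu$. This matches the factor order $\mathrm{SST}_\ell(\lambda,\mu)\times\mathrm{SST}_\ell(\lambda,\nu)$.

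For the restriction statement, suppose $\mu^{(\ell)}=\nu^{(\ell)}=\varnothing$. Then no letter $a_\ell$ occurs in either alphabet. Hence every block $B^{(pq)}$ with $p=\ell$ or $q=\ell$ has zero row or column sums, so it is the empty composition matrix. Next, the flagging condition on $w^{(\ell)}$ allows only letters $a_b$ with $b\ge \ell$, so $w^{(\ell)}$ is the empty biword. Consequently $P^{(\ell)}=Q^{(\ell)}=\varnothing$, which forces $\lambda^{(\ell)}=\varnothing$.

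It remains to show that the surviving data are exactly level-$(\ell-1)$ data. For $p,q\le \ell-1$, the length bound $\min(p,q)$ on $b^{(pq)}_{ij}$, equivalently the bound $\min(p,q)-1$ on $\eta^{(pq)}_{ij}$, does not depend on $\ell$. The flagging conditions on $w^{(t)}$ for $t\le\ell-1$ and on $T^{(i)}$ are likewise level-independent. So each of the three maps restricts to the corresponding level-$(\ell-1)$ map on the truncated tuples. Since each restriction is computed by the same formulas, the composite agrees with the level-$(\ell-1)$ correspondence.

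For $\ell=1$, the length bound $\min(p,q)-1=0$ forces every $\eta_{ij}=\varnothing$. Thus $\mathrm{ParMat}^\flat_{\nu,\mu}$ is just the set of $\mathbb N$-matrices with row sum $\nu$ and column sum $\mu$. Each $b_{ij}$ is then the one-part composition $(a_{ij})$, and $w=w^{(1)}$ is the usual biword of $A$ with letters $i_1$. The map $i_1\mapsto i$ is order-preserving, and Lemma~\ref{lem:biwords-to-tableaux} notes that the result does not depend on the chosen order-preserving relabeling. Hence the composite is exactly \eqref{e rskmatrix}. The main point needing care is the one above: verifying that the relabeling in Lemma~\ref{lem:biwords-to-tableaux} and the flagging conditions are genuinely compatible with truncating the level. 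Everything else is bookkeeping.
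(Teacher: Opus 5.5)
Your proposal is correct and matches the paper's proof: the bijection is the composite of Lemmas~\ref{lem:parmat-to-bcm}, \ref{lem:BCM-to-biwords} and \ref{lem:biwords-to-tableaux}, and componentwise classical RSK is the only substantive ingredient. The paper simply says the restriction and $\ell=1$ properties follow from the definitions. Your explicit checks are the verifications it leaves implicit: that $w^{(\ell)}$ vanishes when $\mu^{(\ell)}=\nu^{(\ell)}=\varnothing$, and that at level one $b_{ij}=(a_{ij})$ with the order-preserving relabeling $i_1\mapsto i$.
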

\begin{proof}
We first use Lemma~\ref{lem:parmat-to-bcm} to replace the domain with 
$\text{\rm BCM}_\ell(\nu,\mu)$. From there,
Lemma~\ref{lem:BCM-to-biwords}  takes flagged block composition matrices bijectively to
flagged biwords. Lemma~\ref{lem:biwords-to-tableaux} then applies
classical RSK componentwise to obtain the desired correspondence. The
specialization and restriction properties follow directly from the definitions.
\end{proof}

\begin{example}
For a chosen $(P,Q)=\left((P^{(1)},P^{(2)},P^{(3)})\mathlarger{,}(Q^{(1)},Q^{(2)},Q^{(3)})\right)$, the inverse map can be explicitly realized using the classical inverse RSK.
\begin{equation*}
(P^{(1)},Q^{(1)})=
\left(
\begin{ytableau}1_1&1_1&2_1\\2_1&2_1\end{ytableau}\,\mathhuger{,}\, \begin{ytableau}1_1&2_1&2_1\\2_1&1_2\end{ytableau}\right)
\mapsto
\begin{pmatrix}1_1\ 2_1\ 2_1 \ 2_1 \ 1_2 \\ 2_1\ 1_1 \ 2_1 \ 2_1\ 1_1\end{pmatrix}=w^{(1)},
\end{equation*}
\begin{equation*}
(P^{(2)},Q^{(2)})=
\left(
\begin{ytableau}1_2&2_3\end{ytableau}\,\mathhuger{,}\, \begin{ytableau}2_2&1_3\end{ytableau}\right)
\mapsto
\begin{pmatrix}2_2\ 1_3\\ 1_2\ 2_3\end{pmatrix}=w^{(2)},
\end{equation*}
\begin{equation*}
(P^{(3)},Q^{(3)})=
\left(
\begin{ytableau}1_3&1_3\\3_3\end{ytableau}\,\mathhuger{,}\, \begin{ytableau}1_3&2_3\\2_3\end{ytableau}\right)
\mapsto
\begin{pmatrix}1_3\ 2_3\ 2_3\\ 3_3\ 1_3 \ 1_3\end{pmatrix}=w^{(3)}.
\end{equation*}
Then $w=(w^{(1)},w^{(2)},w^{(3)})$ is assembled into $B=$
\begin{tabular}{c c | c | c c c}
$\varnothing$ & $1$ & $\varnothing$ & $\varnothing$ & $\varnothing$ & $\varnothing$\\
$1$ & $2$ & $\varnothing$ & $\varnothing$ & $\varnothing$ & $\varnothing$\\
\hline
$1$ & $\varnothing$ & $\varnothing$ & $\varnothing$ & $\varnothing$ & $\varnothing$\\
$\varnothing$ & $\varnothing$ & $01$ & $\varnothing$ & $\varnothing$ & $\varnothing$\\
\hline
$\varnothing$ & $\varnothing$ & $\varnothing$ & $\varnothing$ & $01$ & $001$\\
$\varnothing$ & $\varnothing$ & $\varnothing$ & $002$ & $\varnothing$ & $\varnothing$\\
\end{tabular}.
\end{example}

\bibliographystyle{plain}
\bibliography{mycitations}

\end{document}